\newtheorem{Thm}{Theorem}
\newtheorem{Lem}[Thm]{Lemma}
\newtheorem{Prop}[Thm]{Proposition}
\theoremstyle{definition}
\newtheorem{Def}[Thm]{Definition}
\newtheorem{Rmk}[Thm]{Remark}
\newcommand{\MOM}{Mom}
\begin{document}
\title{Determining hyperbolicity of
compact orientable 3-manifolds with torus boundary}
\author{Robert C. Haraway, III\thanks{Research partially supported
by NSF grant DMS-1006553.}}
\date{\today}
\maketitle

\begin{abstract}
Thurston's hyperbolization theorem
for Haken manifolds and normal surface
theory yield an algorithm to determine
whether or not a compact orientable 3-manifold
with nonempty boundary consisting of tori
admits a complete finite-volume hyperbolic metric
on its interior.

A conjecture of Gabai, Meyerhoff, and Milley
reduces to a computation using this algorithm.
\end{abstract}

\section{Introduction}
The work of J\o rgensen, Thurston, and Gromov
in the late `70s showed (\cite{Thurston82}) that the set of
volumes of orientable hyperbolic 3-manifolds
has order type $\omega^\omega$.  Cao and Meyerhoff in \cite{CM01}
showed that the first limit point is the volume
of the figure eight knot complement.
Agol in \cite{Agol10} showed that the first limit point
of limit points is the volume of the Whitehead link complement.
Most significantly for the present paper,
Gabai, Meyerhoff, and Milley in \cite{GMM09}
identified the smallest, closed, orientable
hyperbolic 3-manifold (the Weeks-Matveev-Fomenko manifold).

The proof of the last result required distinguishing hyperbolic 3-manifolds
from non-hyperbolic 3-manifolds in a large list of 3-manifolds;
this was carried out in \cite{Milley}.
The method of proof was to see whether
the \texttt{canonize} procedure of SnapPy (\cite{SnapPy})
succeeded or not; identify the successes
as census manifolds; and then examine
the fundamental groups of the 66 remaining
manifolds by hand. This method made the 
analysis of non-hyperbolic \MOM-4 manifolds, 
of which there are 762 combinatorial
types, prohibitively time-consuming.

The algorithm presented here determines
whether or not a compact 3-manifold admits
a complete finite-volume hyperbolic metric,
i.e.\,is \emph{hyperbolic}, assuming the
manifold in question has nonempty boundary
consisting of tori.

The \MOM-4s have such boundaries. The current
implementation of this algorithm using
Regina (see \cite{Regina}) classifies them, yielding
the following result.
\begin{table}
\begin{center}
\begin{tabular}{| r | r | r | r | r | r | r |}\hline
m125 & m129 & m202 & m203 & m292 & m295 & m328\\
m329 & m357 & m359 & m366 & m367 & m388 & m391\\
m412 & s441 & s443 & s503 & s506 & s549 & s568\\
s569 & s576 & s577 & s578 & s579 & s596 & s602\\
s621 & s622 & s638 & s647 & s661 & s774 & s776\\
s780 & s782 & s785 & s831 & s843 & s859 & s864\\
s880 & s883 & s887 & s895 & s898 & s906 & s910\\
s913 & s914 & s930 & s937 & s940 & s941 & s948\\
s959 & t10281 & t10700 & t11166 & t11710 & t12039 & t12044\\
t12046 & t12047 & t12048 & t12049 & t12052 & t12053 & t12054\\
t12055 & t12057 & t12060 & t12064 & t12065 & t12066 & t12067\\
t12143 & t12244 & t12412 & t12477 & t12479 & t12485 & t12487\\
t12492 & t12493 & t12496 & t12795 & t12840 & t12841 & t12842\\
v2124 & v2208 & v2531 & v2533 & v2644 & v2648 & v2652\\
v2731 & v2732 & v2788 & v2892 & v2942 & v2943 & v2945\\
v3039 & v3108 & v3127 & v3140 & v3211 & v3222 & v3223\\
v3224 & v3225 & v3227 & v3292 & v3294 & v3376 & v3379\\
v3380 & v3383 & v3384 & v3385 & v3393 & v3396 & v3426\\
v3429 & v3450 & v3456 & v3468 & v3497 & v3501 & v3506\\
v3507 & v3518 & v3527 & v3544 & v3546 &       &      \\\hline
\end{tabular}
\caption{Names of hyperbolic \MOM-4s.}\label{tbl:all}
\end{center}
\end{table}

\begin{Thm}
Table \ref{tbl:all}
constitutes the complete list of hyperbolic \MOM-4s.
\qed
\end{Thm}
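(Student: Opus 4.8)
The plan is to reduce the theorem to a finite computation and then to carry it out rigorously. Because the Mom-4 manifolds are explicitly enumerated in the work of Gabai, Meyerhoff, and Milley, I would begin by loading each of the 762 combinatorial types as a triangulation in Regina. Each such manifold is compact and orientable with boundary a union of tori, so it lies in the domain of the hyperbolicity algorithm established above, and the whole classification becomes a matter of running that algorithm once per type and collating the answers.

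The core step is to apply the algorithm to every Mom-4 in turn. By Thurston's hyperbolization theorem, a compact orientable 3-manifold with torus boundary that is already known to be irreducible and boundary-irreducible—hence Haken—fails to be hyperbolic precisely when it admits an essential torus or an essential annulus. The algorithm therefore searches, via normal surface theory, first for essential spheres and disks (whose presence already precludes hyperbolicity) and then for essential tori and annuli. Normal surface theory makes each of these searches effective, so for every Mom-4 the algorithm returns a definitive verdict: either a certificate exhibiting one of these obstructions, which proves non-hyperbolicity, or a proof that none exists, which by the hyperbolization theorem certifies hyperbolicity. Collecting the types certified hyperbolic yields a finite set $H$.

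It then remains to show that $H$ coincides with Table \ref{tbl:all}. For one inclusion I would identify each manifold in $H$ with a cusped census manifold via its canonical retriangulation and the associated invariants, and verify that the resulting name occurs in the table; this is exactly the recognition step that produces the m-, s-, t-, and v-series entries. For the reverse inclusion I would confirm that every manifold named in the table is hyperbolic (it belongs to the corresponding hyperbolic census) and does arise as a Mom-4, so that no hyperbolic Mom-4 has been omitted.

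The main obstacle I expect is not a single conceptual step but feasibility together with the demand that no case ever return an inconclusive answer. Normal surface enumeration is exponential in the number of tetrahedra, so the computation is tractable only because Mom-4 triangulations are small; even then the implementation must correctly dispose of every degenerate situation—small Seifert-fibered pieces, boundary-parallel or otherwise inessential normal surfaces that must be discarded, and redundant combinatorial types—so that the verdict is provably correct on all $762$ inputs. Guaranteeing this uniform correctness, and ensuring that the recognition step reliably matches each hyperbolic output to its census name, is where the real work lies.
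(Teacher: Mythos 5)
Your proposal matches the paper's proof in substance: the paper likewise reduces the theorem to running its implemented hyperbolicity algorithm on all 762 Mom-4 combinatorial types from the Gabai--Meyerhoff--Milley data and identifying the 138 manifolds that pass as census manifolds. The paper's stated proof is simply the invocation of the \texttt{mom} and \texttt{unhyp} modules on that data, with the correctness burden carried by the algorithm developed in the rest of the paper, exactly as you describe.
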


\begin{proof}
Put the Python modules \texttt{fault} and \texttt{mom}
in one directory also containing the data 
\texttt{test\textunderscore mom4s\textunderscore out.txt}
from \cite{GMM11}. Then from a
\texttt{bash} prompt in a POSIX environment, run

\begin{verbatim}
python mom.py test_mom4s_out.txt | grep \| | awk -F \| '{print $1}'
\end{verbatim}

One finds that the resulting output has
138 lines, each of which is a distinct
name of a cusped manifold on a census,
either the original SnapPea census or
Thistlethwaite's more recent census
of manifolds with eight tetrahedra.
Therefore, Conjecture 5.3 from \cite{GMM11}
is correct.
\end{proof}

Of course, now we should discuss
what is in these modules.

\begin{Rmk}
The author would like to thank Tao Li, for helpful
discussions about normal surface theory and
Seifert fiberings; Dave Futer for pointing out 
an error in a previous version of the paper;
and Neil Hoffman for suggesting the use
of Berge and Gabai's work for an improved
$T^2 \times I$-homeomorphism test.
\qed
\end{Rmk}

\section{Background}

\textbf{Conventions.}
All manifolds herein are assumed to be compact and piecewise-linear.
All maps between these are assumed to be piecewise-linear
and proper (that is, such that the preimage of compacta are again compacta).
In particular, all homeomorphisms are piecewise-linear with piecewise-linear inverses.
\qed

Thurston's hyperbolicity theorem for Haken manifolds merits a succinct formulation.
Shoving some complications from the original theorem into definitions
and restricting attention to manifolds with nonempty torus boundary
yields the following theorem.

\begin{Thm}[\cite{Thurston82}, Thm. 2.3]\label{thm:common}
Let $M$ be a compact orientable 3-manifold with nonempty boundary consisting of tori.
$M$ is hyperbolic with finite volume if and only if $M$ has no faults.
\qed
\end{Thm}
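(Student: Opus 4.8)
The plan is to read the statement as a direct translation of Thurston's hyperbolization theorem for Haken manifolds into the language of faults, so that the proof reduces to matching the definition of \emph{fault} against the classical hypotheses and then citing that theorem. Recall the classical statement in the present setting: for a compact orientable irreducible $3$-manifold $M$ whose boundary is a nonempty union of tori, the interior of $M$ admits a complete finite-volume hyperbolic metric if and only if $M$ is boundary-irreducible, atoroidal, contains no essential annulus, and is not Seifert fibered. Accordingly I would arrange the definition of fault so that $M$ \emph{has a fault} exactly when $M$ exhibits one of the corresponding obstructions: an essential sphere (reducibility), a compressing disk for $\partial M$ (boundary-reducibility), an essential annulus, an essential torus (toroidality), or a Seifert fibering. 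With this dictionary the theorem becomes the assertion that \emph{no faults} is equivalent to the full list of Thurston's hypotheses.

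For the forward direction I would show that a finite-volume hyperbolic $M$ has none of these faults, using standard consequences of hyperbolicity. Such an $M$ is aspherical with torsion-free $\pi_1$, so it is irreducible and its peripheral tori are $\pi_1$-injective, ruling out essential spheres and compressing disks. An essential torus would give a nonperipheral $\mathbb{Z}\oplus\mathbb{Z}$ subgroup of $\pi_1(M)$, which cannot occur in finite volume, so $M$ is atoroidal; and the standard facts that cusped hyperbolic manifolds contain no essential annulus and admit no Seifert fibering (their fundamental groups have no normal infinite cyclic subgroup) dispose of the remaining two fault types. Hence a hyperbolic $M$ has no faults.

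For the converse I would assume $M$ has no faults and feed it into Thurston's theorem. Absence of the five fault types says precisely that $M$ is irreducible, boundary-irreducible, atoroidal, anannular, and not Seifert fibered, while the hypothesis supplies nonempty torus boundary. Since $M$ then has nonempty incompressible boundary it is Haken, so Thurston's hyperbolization theorem applies and endows the interior with a complete hyperbolic metric; finiteness of the volume follows because every boundary component is a torus, so every end is a cusp. This direction is where the depth of the theorem resides, as it is essentially Thurston's result itself.

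The main obstacle is calibrating the definition of fault so that the dictionary is exact, and this is where the complications shoved into definitions actually live. Two points need care. First, the atoroidal, anannular, and Seifert conditions overlap: by the torus and annulus theorems and characteristic-submanifold theory, an irreducible atoroidal manifold with torus boundary that contains an essential annulus is already Seifert fibered, so I must ensure the fault list is neither redundant in a way that destroys the equivalence nor silently omits a case. Second, the degenerate Seifert and product pieces must each be caught by some fault: the solid torus by its meridian compressing disk, $T^2\times I$ and the twisted $I$-bundles by their vertical essential annuli, and the remaining small Seifert fibered spaces by the explicit Seifert-fibering fault. Verifying that every compact orientable $M$ with torus boundary which fails to be hyperbolic does possess at least one fault---so that nothing slips through---is the crux, and it rests on the full strength of geometrization to exclude any non-hyperbolic, non-Seifert, atoroidal alternative.
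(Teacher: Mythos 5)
Your high-level strategy---reduce to Thurston's hyperbolization theorem for Haken manifolds and check that ``no faults'' matches its hypotheses---is the same as the paper's, and your forward direction (finite-volume hyperbolic manifolds contain no essential spheres, compressing discs, essential tori, or essential annuli) agrees with the paper's, except that you omit the nonorientable faults: the paper's definition also counts any properly embedded nonorientable surface of nonnegative Euler characteristic as a fault, so you must also note that finite-volume hyperbolic $3$-manifolds admit no such surfaces.

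The genuine gap is in the converse, where you propose to ``arrange the definition of fault'' so that admitting a Seifert fibering is itself a fault. The definition is fixed by the paper and is purely surface-theoretic (spheres, discs, tori, annuli, and nonorientable surfaces subject to cutting conditions); there is no ``Seifert-fibering fault.'' So after Thurston reduces you to the dichotomy ``hyperbolic or Seifert fibered,'' you still owe a proof that every Seifert-fibered space with nonempty torus boundary contains a fault \emph{surface}. This is the one step with real content beyond quoting Thurston, and the paper supplies it: if the base orbifold contains an essential loop (as happens whenever there are at least two boundary components) the vertical torus over it is a fault, and otherwise the base orbifold is a disc with at most two cone points, and the vertical annulus over a properly embedded arc separating the cone points is an annulus fault. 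Your observation that ``atoroidal plus essential annulus implies Seifert fibered'' points in the wrong direction; what is needed is that a Seifert fibering with boundary and no essential vertical torus still carries an essential vertical annulus. Finally, your derivation of finite volume (``every boundary component is a torus, so every end is a cusp'') is false as stated---the interior of $T^2 \times I$ carries a complete hyperbolic metric of infinite volume---and must instead be handled as the paper does: Thurston gives finite volume except for $T^2 \times I$, which contains non-separating annulus faults and is therefore already excluded by the no-fault hypothesis.
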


The above uses the following definitions.

\begin{Def}
  A manifold is \emph{hyperbolic} when
  its interior admits a complete hyperbolic metric---a
  complete Riemannian metric of constant negative curvature.
\qed
\end{Def}

\begin{Def}
Let $s$ be an embedding of a manifold into a connected manifold $M$. 
By abuse of notation, also let $s$ denote the image of $s$ in $M$.
Suppose $s$ has codimension 1.
Pick a metric on $M$ compatible with its p.l. structure,
and let $M'$ be the path-metric completion of $M \smallsetminus s$.

When $M'$ is disconnected, $s$ \emph{separates} $M$.

When $M'$ has two connected components $N,N'$,
$s$ \emph{cuts off $N$ from $M$},
or, if $M$ is understood from context, \emph{$s$ cuts off $N$}.

If $N$ is homeomorphic to some common 3-manifold $X$, $s$ \emph{cuts off an $X$};
if, in addition, $N'$ is not homeomorphic to $X$, $s$ cuts off \emph{one} $X$.
\qed
\end{Def}

\begin{Def}
  A properly embedded surface $s$ in an orientable 3-manifold $M$ is a \emph{fault} 
  when $\chi(s) \geq 0$ and it satisfies one of the following:

  \begin{itemize}
  \item $s$ is nonorientable.
  \item $s$ is a sphere that does not cut off a 3-ball.
  \item $s$ is a disc that does not off one 3-ball.
  \item $s$ is a torus that does not cut off a
    $T^2 \times I$, and does not cut off
    a $\partial$-compressible manifold.
  \item $s$ is an annulus that does not cut off 
    a 3-ball, and does not cut off one solid torus,
    and $M$ has none of the above types of fault.
  \end{itemize}
\qed
\end{Def}

\begin{proof}
  This is a corollary of common knowledge
  surrounding Thurston's hyperbolization theorem for Haken manifolds.
  Specifically, it's commonly known that an irreducible, $\partial$-incompressible,
  geometrically atoroidal 3-manifold with nonempty boundary consisting of tori
  is either hyperbolic or Seifert-fibered.
  All Seifert-fibered spaces with at least two boundary components
  admit essential tori, which are faults.
  A Seifert-fibered space with one boundary component may admit no essential tori.
  In this case, the base orbifold $\Sigma$ is a disc with at most two cone points.
  Let $\alpha$ be a properly embedded arc which separates $\Sigma$ into discs
  with at most one cone point each; then the vertical fiber over $\alpha$
  is an annulus fault. Hence all Seifert-fibered spaces with nonempty boundary admit faults.

  Consequently, a compact orientable 3-manifold with nonempty boundary
  consisting of tori which admits no faults is irreducible, $\partial$-incompressible,
  Haken, and geometrically atoroidal, and it admits no annulus faults.
  So it must be hyperbolic.

  In fact, Thurston proved something more, namely that
  unless this manifold is $T^2 \times I$, then its metric has finite volume.
  Now, $T^2 \times I$ admits faults---non-separating annuli, in fact.
  Since we assumed the manifold had no faults, its metric must have finite volume.

  Conversely, orientable hyperbolic 3-manifolds of finite volume
  admit no orientable faults---they have no essential spheres, no compressing discs,
  no incompressible tori which aren't $\partial$-parallel,
  and no annuli which are both incompressible and $\partial$-incompressible.
  Finally, orientable hyperbolic 3-manifolds of finite volume
  don't admit any faults at all, since they admit
  no properly embedded nonorientable surfaces of nonnegative Euler characteristic.
\end{proof}

\section{A hyperbolicity algorithm}

We can turn Theorem \ref{thm:common} into an algorithm as follows.

\begin{Def}
  A 3-triangulation is a face-pairing of distinct tetrahedra.
  It is \emph{valid} when it does not identify an edge to itself backwards.
  It is \emph{material} when the link of every vertex is a sphere or disc.
  \qed
\qed
\end{Def}

\begin{Thm}\label{thm:ideal}
  Let $T$ be a valid material triangulation of a compact orientable 3-manifold $M$.
  Then $M$ has a fault precisely when $T$ has a fundamental fault.
\qed
\end{Thm}

\begin{proof}
  This is a simple consequence of standard results in normal surface theory.
  If $M$ has an embedded projective plane or essential sphere, then it
  has such a surface among fundamental normal surfaces
  (\cite{Matveev}, Theorem 4.1.12). If $M$ is irreducible and has a
  compressing disc, then it has a compressing disc among fundamental
  normal surfaces (\cite{Matveev}, proof of Theorem 4.1.13).
  If $M$ is $\partial$-irreducible and has an embedded Klein bottle
  or essential torus, then it has such a surface among fundamental
  normal surfaces (\cite{Matveev}, Lemma 6.4.7). If $M$ is $\partial$-irreducible
  and has an embedded essential annulus or M\"{o}bius band, then it
  has such a surface among fundamental normal surfaces (\cite{Matveev}, Lemma 6.4.8).
  Thus, if $M$ has an essential sphere, disc, torus, or annulus, or
  a nonorientable surface of nonnegative Euler characteristic, then
  $T$ has such a surface among its fundamental surfaces.

  Essential surfaces are the same as faults, except for essential annuli.
  An essential annulus might not be a fault, since it might cut off one
  solid torus. If $M$ is not irreducible, $\partial$-irreducible, and
  geometrically atoroidal, then $M$ has a non-annulus fault, and one
  can find such a fault among fundamental surfaces. Otherwise, $M$ is
  either hyperbolic or Seifert-fibered. If $M$ is hyperbolic
  then it has no faults. If $M$ is not hyperbolic,
  then since it is geometrically atoroidal, it is Seifert-fibered over
  $S^2$ with three exceptional fibers, over $D^2$ with
  at most two exceptional fibers, or over the annulus with at most one exceptional fiber.

  In the first case, $M$ has no faults. Consider the second case. With
  one exceptional fiber, $M$ is a solid torus and has a compressing disc.
  With two exceptional fibers, $M$ has up to isotopy only one essential
  surface, a vertical annulus separating the exceptional fibers. This
  annulus cuts off \emph{two} solid tori, not just one, so it is a fault.

  Consider now the third case. With no exceptional fibers, $M$ is $T^2\times I$.
  Up to equivalence, $M$ has only one essential annulus, $\gamma \times I$
  with $\gamma$ essential in $T^2$. This annulus cuts $M$ into one solid
  torus, but does not cut this torus \emph{off}, so this annulus is a fault.
  With one exceptional fiber, $M$ has three essential annuli up to isotopy,
  all vertical: two recurrent annuli that cut off one solid torus, and one
  nonseparating annulus $A$.

  We contend that there is a surface isotopic
  to $A$ among the fundamental surfaces, or there is
  some other, more easily detectable fault. Indeed, we can isotope $A$
  to be normal. Suppose $A = A' + A''$. If, say, $A'$ had positive
  Euler characteristic, then $A''$ would be compressible, and would
  compress to a surface isotopic to $A$, since $A$ is essential.
  Let $D$ be a complete set of compressing discs for $A''$, compressing
  it to $A$. After normalizing this surface following the shrinking moves of
  \cite{JR03}, one has a normal surface isotopic to $A$, but with smaller
  total weight. Suppose instead that $\chi(A') = \chi(A'') = 0$.
  If, say, $A'$ were an annulus, then it would necessarily also
  be a nonseparating annulus, but of less total weight.
  If, instead, neither were an annulus,
  then both would be M\"{o}bius strips. But then $M$ would have an
  embedded M\"{o}bius strip, a fault.
\end{proof}

Assuming $T$ is an ideal triangulation of a compact orientable 3-manifold $M$
with nonempty boundary consisting of tori, Algorithm \ref{alg:hyp}
determines whether or not $M$ is hyperbolic.

\begin{algorithm}
  \caption{Hyperbolicity test for link exteriors}\label{alg:hyp}
  \begin{algorithmic}[1]
    \Procedure{Hyp}{$T$}\Comment{$T$ is assumed to be valid, material, orientable, and not closed.}
    \State Let $l$ be the list of fundamental normal surfaces in $T$.
    \If{$l$ has an embedded nonorientable surface $\Sigma$ with $\chi(\Sigma) \geq 0$}
      \State \textbf{return} false
      \ElsIf{$l$ has an essential sphere}
      \State \textbf{return} false
      \ElsIf{$l$ has a compressing disc}
      \State \textbf{return} false
      \ElsIf{$l$ has an essential torus}
      \State \textbf{return} false
      \ElsIf{$l$ has an essential annulus}
      \State \textbf{return} false
      \Else \State {\textbf{return} true}
      \EndIf
    \EndProcedure
  \end{algorithmic}
\end{algorithm}

Of course, this algorithm depends upon enumerating fundamental normal
surfaces, and upon determining whether or not a normal surface is a fault.
Now, any connected compact nonorientable surface of nonnegative Euler
characteristic is a fault. A sphere is a fault when, as above, it
does not cut off one 3-ball. Regina has methods for cutting along surfaces
and determining whether or not a 3-manifold is a 3-ball. So we can
readily determine whether or not a sphere is a fault in Regina.
A disc is a fault when the same thing happens. So we could detect
whether or not a disc is a fault in Regina.
An annulus is a fault when it does not cut off a
3-ball and does not cut off one solid torus.
Regina also has a test for homeomorphism to the solid torus.
So we can test whether or not an annulus is a fault in Regina.
Finally, a torus is a fault when it does not cut
off a component admitting a compressing disc,
and does not cut off a $T^2\times I$.
Regina also has a test for admitting a compressing disc,
but does not have a test for homeomorphism to $T^2\times I$.
So to implement the above hyperbolicity algorithm,
it remains for us to implement a $T^2\times I$-homeomorphism test.
Such tests already exist in the literature, but have not been
implemented using triangulations due to their reliance on boundary patterns.

\section{A new test for homeomorphism to $T^2\times I$}

We can notice first that admitting a non-separating annulus 
is a necessary condition for being $T^2 \times I$.
We note that a further necessary condition for being $T^2 \times I$
is that splitting along any such annulus is a solid torus.
Now, if a 3-manifold $M$ split along a non-separating annulus
is a solid torus, then $M$ is a Seifert fibering with base orbifold
an annulus or M\"{o}bius band with at most a single cone point,
i.e. $M = M(0,2;r)$ or $M = M(-0,1;r)$ for some $r \in \mathbb{Q}$.
In the latter case, $M$ has only one boundary component,
so it cannot possibly be $T^2 \times I$. Thus we
may restrict our attention to the case $M = M(0,2;r)$.
Recall the following results about Seifert fiberings:

\begin{Prop}[\cite{Hatcher}, 2.1]\label{prp:h21}
Every orientable Seifert fibering is 
isomorphic to one of the models
$M(\pm g,b; s_1, \ldots, s_k).$ Any two Seifert fiberings
with the same $\pm g$ and $b$ are isomorphic
when their multisets of slopes are equal modulo 1
after removing integers, assuming $b > 0$.
\qed
\end{Prop}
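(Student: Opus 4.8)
The plan is to reduce an arbitrary orientable Seifert fibering to a canonical decomposition and then to identify exactly which choices in that decomposition fail to affect the fiber-isomorphism type. Throughout I fix, on every boundary torus that arises, the ordered basis given by a section of the ambient circle bundle together with a regular fiber, so that every gluing or regluing map is recorded by the slope to which a meridian is sent.

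For the existence of the model form, I would first collapse each fiber to a point, obtaining the base $2$-orbifold; its underlying surface records the genus $g$, the orientation type (the sign $\pm$), the number $b$ of boundary circles, and a finite set of cone points. Removing a fibered open solid-torus neighborhood of each singular fiber leaves a genuine circle bundle over a compact surface $F$ with nonempty boundary, and any such circle bundle is a product $F \times S^1$. Regluing the removed fibered solid tori, each regluing determined by a slope, produces invariants $s_1,\dots,s_k$ and the normal form $M(\pm g, b; s_1,\dots,s_k)$. (In the closed case one also excises and reglues one regular fiber, whose integral slope records the Euler number; when $b>0$ the original boundary tori already supply the needed freedom and no such extra fiber is required.)

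To prove the stated sufficiency I would exhibit the fiber-preserving moves that leave the isomorphism type unchanged. Permuting the singular fibers merely permutes the slope multiset. Changing the chosen section of $F\times S^1$ near one regluing torus by a single full fiber twist shifts the corresponding slope by an integer; the crucial point is that when $b>0$ there is at least one honest boundary torus of $M$ along which nothing is reglued, so its framing is unconstrained. A fiber-preserving homeomorphism supported in a collar joining a singular fiber to such a free boundary torus can carry an integral twist across $F$ and discharge it through the boundary without altering $M$. Hence each slope's integer part may be adjusted independently, and two model forms sharing $\pm g$ and $b$ whose slope multisets agree modulo $1$ after discarding integers are fiber-isomorphic.

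The main obstacle is making this discharge move precise and confirming that $b>0$ is exactly what permits it. I would write each gluing map in the fixed (section, fiber) basis, compute how the elementary twist supported in the collar acts on the slope at the singular fiber, and verify that it adds $\pm 1$ there while leaving the isomorphism type of $M$ intact, the difference being absorbed into the framing of the free boundary torus. In the closed case this escape route is absent: the total integral twist is pinned down by the Euler number, a genuine isomorphism invariant, so there only the fractional parts together with the Euler number are determined. The hypothesis $b>0$ is precisely the condition that kills the Euler number, and once the single discharge move is verified an easy induction removes every integer part, completing the argument.
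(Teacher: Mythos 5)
The paper offers no proof of this proposition at all---it is quoted from Hatcher's notes (Proposition 2.1) as a citation---so there is no in-paper argument to compare against; your sketch is essentially a reconstruction of Hatcher's standard proof. The decomposition (drill out fibered neighborhoods of the exceptional fibers, reduce to a circle bundle over a surface with boundary, record the regluings by slopes) and the key move (a fiber-preserving twist supported on a vertical annulus running from a regluing torus to a genuine boundary component, which shifts that slope by $\pm 1$ while leaving the manifold unchanged) are exactly right, and you correctly identify $b>0$ as the hypothesis that supplies the free boundary torus into which the integral twist is discharged, the Euler number $\sum s_i$ being the obstruction when $b=0$.

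Two points need repair. First, your claim that the circle bundle left after drilling is always a product $F \times S^1$ fails in the $-g$ case: if the base surface is nonorientable and the total space is orientable, the bundle must reverse fiber orientation around orientation-reversing loops, so it is the twisted $S^1$-bundle over $F$, not the product. The correct statement is that over a compact surface with nonempty boundary there is a unique circle bundle with orientable total space, and that is what the model $M(-g,b;\dots)$ is built from. Second, the proposition compares slope multisets \emph{after removing integers}, which changes the number $k$ of slopes, so adjusting integer parts via the discharge move is not by itself enough: you also need the observation that a slope lying in $\mathbb{Z}$ can be normalized to $0$ and that filling with slope $0$ simply reinserts a regular fiber, so that entry can be deleted from the data entirely, giving $M(\pm g,b;s_1,\dots,s_{k-1},0) \cong M(\pm g,b;s_1,\dots,s_{k-1})$. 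With those two emendations your outline is the standard argument the paper is implicitly relying on.
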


\begin{Thm}[\cite{Hatcher}, 2.3]\label{thm:h23}
Orientable manifolds
admitting Seifert fiberings
have unique such fiberings 
up to isomorphism, except for
$M(0,1;s)$ for all $s \in \mathbb{Q}$ (the solid torus), 
$M(0,1;1/2, 1/2) = M(-1,1;)$
(not the solid torus), and three others without boundary.
\qed
\end{Thm}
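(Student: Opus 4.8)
The plan is to identify the regular fiber algebraically and then appeal to the rigidity of Haken manifolds. Given a Seifert fibering of orientable $M$, the regular fiber $h$ generates an infinite cyclic normal subgroup $Z=\langle h\rangle\trianglelefteq\pi_1(M)$ whose quotient is the orbifold fundamental group $\pi_1^{\mathrm{orb}}(\Sigma)$ of the base. By Proposition \ref{prp:h21} a fibering is determined up to isomorphism by $\Sigma$ together with its slope data, and by Waldhausen's theorem a $\pi_1$-automorphism of a Haken manifold is realized by a homeomorphism, unique up to isotopy. So it suffices to show that, outside the listed manifolds, the conjugacy class of $Z$ is an invariant of the group $\pi_1(M)$; that is, $Z$ is characteristic. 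Then any two fiberings present conjugate fiber subgroups, the induced isomorphism of $\pi_1^{\mathrm{orb}}(\Sigma)$ is realized by a self-homeomorphism of $M$, and that homeomorphism can be isotoped to carry fibers to fibers, exhibiting the two fiberings as isomorphic.

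First I would dispatch the generic case in which the base $2$-orbifold $\Sigma$ has negative Euler characteristic. Here $\pi_1^{\mathrm{orb}}(\Sigma)$ is infinite and not virtually cyclic with trivial center, so a standard computation shows $Z$ is the unique maximal normal cyclic subgroup of $\pi_1(M)$: it is the actual center when $\Sigma$ is orientable, and otherwise still characteristic because conjugation can only send $h$ to $h^{\pm1}$. Hence $Z$ is characteristic and the reduction above yields uniqueness up to isomorphism.

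The remaining step, and the one I expect to be the main obstacle, is the analysis when $\chi^{\mathrm{orb}}(\Sigma)\ge 0$. There are only finitely many such base orbifolds, so this is a finite check, but it is the delicate part: the center of $\pi_1(M)$ may properly contain $Z$, or $\pi_1(M)$ may be finite, so the fiber class need no longer be canonical and the manifold can genuinely carry non-isomorphic fiberings. One must (i) treat the reducible and finite-$\pi_1$ cases, where the underlying manifold is a lens space, $S^3$, $S^2\times S^1$, or a small spherical space form, and isolate among these the three closed exceptions; (ii) for base a disc with at most one cone point, recognize the solid torus $M(0,1;s)$ and exhibit its family of inequivalent fiberings; and (iii) for the Euclidean orbifolds with boundary, identify the single orientable coincidence $M(0,1;1/2,1/2)\cong M(-1,1;)$, the twisted $I$-bundle over the Klein bottle, by constructing its two fiberings and checking via Proposition \ref{prp:h21} that they are not isomorphic. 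The subtlety throughout is that uniqueness is only claimed up to isomorphism: manifolds such as $T^2\times I$ admit visibly different fiber directions that are nonetheless interchanged by a self-homeomorphism, so for each small orbifold one must decide whether the extra symmetry of $\pi_1(M)$ actually moves the fiber class, and this requires either producing a genuinely inequivalent alternate fibering or verifying that none exists.
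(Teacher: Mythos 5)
The paper does not prove this statement at all: Theorem~\ref{thm:h23} is imported verbatim from Hatcher's notes (\cite{Hatcher}, Thm.~2.3) and used as a black box, so there is no internal proof to compare against. Measured against the standard proofs in the literature, your plan --- make the fiber subgroup $Z=\langle h\rangle$ characteristic in $\pi_1(M)$, then realize automorphisms geometrically --- is the classical algebraic route (Waldhausen, Orlik--Vogt--Zieschang), whereas Hatcher's own argument is more geometric, classifying essential surfaces as vertical or horizontal and inducting by splitting along vertical annuli and tori. The algebraic route is legitimate, and your treatment of the generic case is essentially right: for $\chi^{\mathrm{orb}}(\Sigma)<0$ the base group is a nonelementary Fuchsian (or virtually free nonabelian) group, every nontrivial normal subgroup of which is nonelementary, so any infinite cyclic normal subgroup of $\pi_1(M)$ must lie in $Z$. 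You should say that last sentence explicitly; ``a standard computation'' is hiding the one real lemma of the generic case.

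There are, however, genuine gaps. First, Waldhausen's theorem requires $M$ to be Haken, and closed Seifert fiberings over hyperbolic triangle orbifolds $S^2(p,q,r)$ need not be Haken; for those your reduction breaks down, and even where it applies, the final step --- ``that homeomorphism can be isotoped to carry fibers to fibers'' --- is precisely the geometric content of the theorem and cannot simply be asserted. (Note also that knowing $Z$ is characteristic identifies the two fiber subgroups; it does not yet identify the Seifert invariants, which is what Proposition~\ref{prp:h21} needs as input.) Second, the entire $\chi^{\mathrm{orb}}(\Sigma)\ge 0$ analysis, which is where every exception in the statement actually lives --- the solid tori $M(0,1;s)$, the coincidence $M(0,1;1/2,1/2)=M(-1,1;)$, and the three closed families --- is announced as a finite check but not carried out, so the proposal as written establishes neither that the listed manifolds are the only exceptions nor that they genuinely are exceptions. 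Since the paper only invokes this theorem for the bounded cases $M(0,1;\ast)$ and $M(0,2;r)$ (in Proposition~\ref{prp:t2ichar} and its corollary), the part of the theorem that actually matters here is exactly the part your sketch defers.
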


It is quite easy to compute slopes differing mod 1
after simplifying the cusps' induced triangulations.

\begin{Prop}\label{prp:neqv}
In a triangulation of the torus $T^2$ by one vertex,
for any nontrivial element $g$ of $H_1(T^2)$,
the edges of the triangulation represent homology classes
not all equivalent mod $g$.
\qed
\end{Prop}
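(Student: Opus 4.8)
The plan is to work directly with the standard minimal triangulation and reduce the statement to an elementary fact about cyclic subgroups of $\mathbb{Z}^2$. First I would fix the model: realize $T^2$ as the unit square with opposite sides identified, and obtain the one-vertex, three-edge, two-face triangulation by adding a diagonal. Writing $\{a,b\}$ for the basis of $H_1(T^2)\cong\mathbb{Z}^2$ given by the horizontal and vertical edges, the diagonal edge is homologous to $a+b$ (it is the common edge of the two triangles, each of which yields the relation that the diagonal equals the sum of the other two sides). So the three edges represent the classes $a$, $b$, and $a+b$.

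Next I would recast ``equivalent mod $g$'' as membership in a common coset of the cyclic subgroup $\langle g\rangle\le H_1(T^2)$. Suppose toward a contradiction that $a$, $b$, and $a+b$ are all congruent mod $g$. Then their pairwise differences lie in $\langle g\rangle$; since $(a+b)-b=a$ and $(a+b)-a=b$, both $a$ and $b$ belong to $\langle g\rangle$. Hence $\langle g\rangle$ contains all of $H_1(T^2)=\langle a,b\rangle\cong\mathbb{Z}^2$. But $\langle g\rangle$ is cyclic, whereas $\mathbb{Z}^2$ is not; this contradiction shows the three classes cannot all be congruent mod $g$.

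The only step needing care is the identification of the edge classes as $a$, $b$, and $a+b$, which I expect to be the main (though minor) obstacle. One can verify it either from the explicit square model above, or intrinsically from the simplicial chain complex: the single vertex forces $\partial_1=0$, so $H_1$ is the quotient of $\mathbb{Z}^3$ (spanned by the three edges) by the image of $\partial_2$, and since $H_1(T^2)\cong\mathbb{Z}^2$ this image is the primitive rank-one subgroup cut out by the single triangle relation $e_3=e_1+e_2$. Either route confirms that the two ``short'' edges form a basis while the ``long'' edge is their sum, which is all the argument uses. Note that the reasoning in fact applies to any $g\in H_1(T^2)$, not merely nontrivial ones, since for $g=0$ the three distinct classes are already not all equal.
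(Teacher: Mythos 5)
Your proposal is correct and takes essentially the same route as the paper: orient the three edges so that one class is the sum of the other two, note that they generate $H_1(T^2)\cong\mathbb{Z}^2$, and observe that mutual congruence mod $g$ would force a rank-two group into the cyclic subgroup $\langle g\rangle$. The paper phrases this with abstract classes $v,w,x$ satisfying $v+w=x$ rather than the explicit square model, but the algebraic core is identical.
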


\begin{proof}
Suppose $v,w,x \in H_1(T^2)$ and $v+w = x$.
Let $\equiv$ denote equivalence in $H_1(T^2) \mod g$.
If $v \equiv x$, then $v + w \equiv x + w$, i.e.\, $x \equiv x + w$
(since $v + w = x$). Thus $0 \equiv w$. Now, if it were the case that
also $v \equiv w$, then also $v$ and $x$ would be $0$ mod $g$.
But then $v,w,x$ would all be multiples of $g$. However,
they generate $H_1(T^2)$, which is not cyclic. That is a
contradiction. So not all of $v,w,x$ are equivalent mod $g$.
\end{proof}

Although one could already use just the above theorems
to develop a simpler test than
the one in \cite{JT95} or \cite{Matveev} using boundary patterns,
it still involves a search for annuli, and cuts along such annuli.
Searches for annulus faults are expensive.
Neil Hoffman has kindly called my attention to the work of Berge and Gabai
on knots in solid tori (see \cite{Berge} and \cite{Gabai89}, \cite{Gabai90}).
This work enables the following simple algorithm
to determine homeomorphism to $T^2 \times I$.

\begin{Thm}
Algorithm \ref{alg:t2i} determines whether or not a compact, orientable,
3-manifold $M$ with nonempty boundary consisting of tori is $T^2 \times I$.

\begin{algorithm}
  \caption{Homeomorphism to $T^2 \times I$}\label{alg:t2i}
  \begin{algorithmic}[1]
    \Procedure{$T^2\times I$?}{$T$}
    \If{$T$ is not a homology $T^2\times I$}
    \State \textbf{return false}
    \EndIf
    \State Simplify the boundary components of $T$ to have one vertex each.
    \State Pick a boundary component $\kappa$ of $T$; it has three edges.
    \ForAll{edges $e$ of $\kappa$}
    \State Let $T_e$ be $T$ folded along $e$.
    \If{not $D^2\times S^1$?$(T_e)$}
    \State \textbf{return false}
    \EndIf
    \EndFor
    \State \textbf{return true}
    \EndProcedure
  \end{algorithmic}
\end{algorithm}
\qed
\end{Thm}

\begin{proof}
If $T$ Dehn fills to a manifold which is not
a solid torus, then $T$ is not $T^2 \times I$.
Thus the \texttt{return False} statements are correct.

It remains to show that if $T$ Dehn fills to $D^2 \times S^1$
along the three given slopes, then in fact $T$
is $T^2 \times I$. $T$ Dehn fills to $D^2 \times S^1$,
so it is the complement of a knot $k$ in $D^2 \times S^1$.
$k$ admits a nontrivial filling (actually, two fillings)
to $D^2 \times S^1$, so by Theorem 1.1 of \cite{Gabai89},
$k$ is either a 0- or 1-bridge braid. (In particular,
$T$ is irreducible, so we need not even test irreducibility
of $T$.) Now, 1-bridge braids which are not 0-bridge admit
only two $D^2 \times S^1$ fillings, by Lemma 3.2 of \cite{Gabai90}.
Therefore, $k$ is 0-bridge. Furthermore, by Example 3.1 of \cite{Gabai90},
for every nontrivial 0-bridge knot complement $N$ in $D^2 \times S^1$
with knot-neighborhood boundary $T$, there is a slope $\beta$
on $T$ such that for every slope $\alpha$ on $T$ along which
$N$ fills to a $D^2 \times S^1$, we have $\langle \alpha, \beta \rangle = 1 \mod 2$.
But we've found three $D^2 \times S^1$ slopes $a,b,c$ 
such that $a + b + c = 0 \mod 2$.
If $T$ were a nontrivial 0-bridge knot complement, then

\[
0 = \langle a + b + c, \beta \rangle
= \langle a, \beta \rangle + \langle b, \beta \rangle 
+ \langle c, \beta \rangle
= 1 + 1 + 1 = 1 \mod 2,
\]

impossible. Therefore, $T$ is a trivial 0-bridge knot complement.
That it, $T = T^2 \times I$.
\end{proof}

It remains to describe how to ``simplify'' a triangulation to induce a
minimal triangulation on a boundary components, usually called
a \emph{cusp} in this context; and how to fill along a slope in a simplified cusp.
One may find an algorithm in SnapPy for simplifying boundary components, a
special, simpler case of which is presented here.\footnote{SnapPy's approach to
cusp filling is reminiscent of the \emph{layered-triangulations} developed in \cite{JR06}.}
There is a more efficient method for accomplishing such a simplification,
but it requires the technique of \emph{crushing} developed in \cite{JR03}.

We use the following terminology.
\begin{Def}
First, suppose $M$ is materially triangulated. Let $T$, $T'$
be boundary triangles adjacent along an edge $e$.
Orient $e$ so that $T$ lies to its left
and $T'$ to its right. 
Let $\Delta$ be a fresh tetrahedron, 
and let $\tau$, $\tau'$
be boundary triangles of $\Delta$ adjacent
along an edge $\eta$. Orient $\eta$ so that
$\tau$ lies to its left and $\tau'$ to its right.
Without changing $M$'s topology we may glue
$\Delta$ to $T$ by gluing $\eta$ to $e$, 
$\tau$ to $T'$ and $\tau'$ to $T$. This is
called a \emph{two-two} move.
\qed
\end{Def}

In the above definition, the edge $\eta'$ opposite
$\eta$ in $\Delta$ becomes a boundary edge of the
new material triangulation.

\begin{Def}
We say $e$ is \emph{embedded} when its
vertices are distinct. We say $e$ is
\emph{coembedded}
when $\eta'$ as defined above is embedded. Equivalently,
$e$ is coembedded when the
vertices in $T,T'$ opposite $e$ are distinct.

Given a boundary edge $e$ between
two boundary triangles $T$ and $T'$, one may
glue $T$ to $T'$ and $e$ to itself via a valid,
orientation-reversing map from $T$ to $T'$.
In \cite{Regina} and \cite{SnapPy} this is called
the \emph{close-the-book move along $e$}.
\qed
\end{Def}

\begin{Prop}
Given a material triangulation $D$
with boundary consisting of tori,
Algorithm \ref{alg:bdy} constructs
a material triangulation $T$ of the same
underlying space such that $T$ induces a one-vertex
triangulation on every boundary component.
\qed
\end{Prop}

\begin{algorithm}
  \caption{Boundary simplification}\label{alg:bdy}
  \begin{algorithmic}[1]
    \Procedure{$\partial$-simplify}{$T$}
    \State Let $D$ be a copy of $T$
    \While{$D$ has an embedded boundary edge $e$}
        \State Layer on $e$.
        \While{$D$ has a coembedded boundary edge $f$}
            \State Close the book along $f$.
        \EndWhile
    \EndWhile
    \EndProcedure
  \end{algorithmic}
\end{algorithm}

\begin{proof}
  Two-two moves never change the topology.

  Suppose $e$ is a boundary edge of $T$ lying in distinct triangles $t$ and $t'$.
  Closing the book along $e$ will leave the topology of $T$ invariant
  if and only if the vertices opposite $e$ in $t$ and $t'$ are distinct in $T$,
  i.e.\,if and only if $e$ is coembedded.
  Notice that closing the book along a coembedded edge
  decreases the number of boundary triangles,
  and performing a two-two move on an embedded edge produces a coembedded edge
  and preserves the number of boundary triangles.
  Therefore, the above \texttt{while} loops terminate,
  using number of boundary triangles as a variant function.

  The obvious postcondition of the outer while
  loop is that there is no embedded boundary
  edge. Since the boundary is still triangulated,
  this is equivalent to each boundary component having
  only one vertex on it.
\end{proof}

\begin{Rmk}  
The routine in SnapPy is more complicated
because, rather than filling in a cusp any old way,
SnapPy wants to make sure the filling compresses
some given slope in the cusp.
\end{Rmk}

This concludes the present sketch of an algorithm
to determine hyperbolicity of a
compact, orientable 3-manifold with nonempty
boundary consisting of tori.
Both literate and raw implementations of this
algorithm as a Regina-Python module
\texttt{unhyp} reside at \cite{carrot}.
Also available at \cite{carrot} is a Regina
Python module \texttt{mom} for interpreting Milley's
data as manifolds in Regina.

\section{Two Useful Heuristics}

Although the above does yield an algorithm to
determine hyperbolicity, it frequently happens
that one can easily disprove hyperbolicity from
a group presentation. The following is a particularly
useful kind of presentation.

\begin{Def}
  A \emph{common axis relator} is a word of the form
  $a^p b^q$ or $a^p b^q a^{-p} b^{-q} (= [a^p,b^q])$ for some integers $p,q$.
  A \emph{common axis presentation} is a finite presentation
  with two generators, and with at least one common axis relator.
\end{Def}

\begin{Lem}
  A group admitting a common axis presentation is not
  the fundamental group of a hyperbolic link exterior.
\end{Lem}

\begin{proof}
  Suppose $G$ is a subgroup of $Isom^+(H^3)$ admitting
  a common axis presentation, i.e. admitting generators
  $a,b$ such that for some $p,q \in \mathbf{Z}$,
  $a^p b^q$ or $[a^p,b^q]$ is trivial. In this case,
  $a$ and $b$ must have the same axis in $H^3$.
  But then since $a$ and $b$ generate $G$,
  $G$ preserves an axis in $H^3$. If this axis
  is a line, then $a$ and $b$ are commuting loxodromic
  elements. If this axis is a point at infinity,
  then $a$ and $b$ are commuting parabolic elements.
  In both cases, $G$ is abelian, and therefore
  does not have finite-volume quotient. But
  the metric on a hyperbolic link exterior
  has finite volume.
\end{proof}

Moreover, it frequently happens that a small
triangulation of a hyperbolic 3-manifold admits
a \emph{strict angle structure}; such structures
themselves constitute proofs of hyperbolicity
(see \cite{FuterGueritaud}). Regina can very often
calculate these structures as well; this proves
hyperbolicity without a laborious normal surface enumeration.

\section{Further Directions}
Neil Hoffman and I have recently gotten results
on the complexity of the hyperbolicity problem,
which we are in the process of writing up. Among
other things we will show that the problem of
hyperbolicity is in the complexity class
\textbf{coNP} for nontrivial link exteriors,
assuming that $S^3$-recognition is in \textbf{coNP}.
\bibliographystyle{plain}
\bibliography{dethyp}

\end{document}